\documentclass[11pt]{amsart}

\usepackage[utf8]{inputenc}
\usepackage{amsmath,amssymb,amsthm,mathtools}
\usepackage{enumitem}
\usepackage{tikz}
\usepackage[hidelinks]{hyperref}
\newtheorem{theorem}{Theorem}
\newtheorem{lemma}[theorem]{Lemma}
\newtheorem*{factA}{Fact A}
\newtheorem*{factB}{Fact B}
\theoremstyle{definition}
\newtheorem{definition}[theorem]{Definition}

\newcommand{\R}{\mathbb{R}}
\newcommand{\Q}{\mathbb{Q}}
\newcommand{\N}{\mathbb{N}}
\newcommand{\Z}{\mathbb{Z}}
\newcommand{\Cs}{2^{\N}}
\newcommand{\cR}{\mathcal{R}}
\newcommand{\cB}{\mathcal{B}}
\newcommand{\ip}[2]{\langle #1,#2\rangle}
\newcommand{\Fm}{F_m}
\newcommand{\Fl}{F_m^{-}}
\newcommand{\Fr}{F_m^{+}}

\begin{document}

\title[A Borel set with non-Borel projections]{A bounded Borel set in the plane none of whose orthogonal projections is Borel}
\author{Dmitry Krokhalev \\ \href{mailto:dima.krohalev@bk.ru}{dima.krohalev@bk.ru}}
\subjclass[2020]{03E15, 28A05, 54H05}
\keywords{Borel sets, analytic sets, orthogonal projections}

\begin{abstract}
We construct a bounded Borel set in the plane whose orthogonal projection
onto every line is non-Borel. This gives a negative answer to a question
of Thierry De Pauw.
\end{abstract}

\maketitle

\section{Introduction}

The orthogonal projection of a Borel subset of the plane need not be
Borel. Such projections are analytic sets, and the existence of analytic
non-Borel sets goes back to Suslin~\cite{Su}. De Pauw asked whether every
Borel subset of the plane nevertheless has a Borel orthogonal projection
in at least one direction; see Tristant~\cite{Tr} for this question and
related constructions. We answer the question in the negative.

\begin{theorem}\label{thm:main}
There exists a bounded Borel set $\cB\subset\R^2$ whose orthogonal
projection onto every line in the plane is not a Borel set.
\end{theorem}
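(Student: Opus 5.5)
We will build $\cB$ as a countable union $\cB=\bigcup_m B_m$ of bounded Borel pieces sitting in disjoint, well-separated regions. Each piece $B_m$ is responsible for the directions in one closed arc $I_m$ of the circle of directions, and the arcs $I_m$ cover all directions. For a direction $\theta$, write $\pi_\theta(x)=\ip{x}{\theta}$ for projection onto the line through $\theta$. The strategy has two parts. First, for every $\theta\in I_m$ the set $\pi_\theta(B_m)$ should contain a non-Borel analytic set in a "clean" way: it should be non-Borel on some interval $J_{m,\theta}$ where nothing else lands. Second, the rest of $\cB$ should not repair this. Non-Borelness survives restriction to a Borel set, so it is enough to show that
\[
\pi_\theta(\cB)\cap J_{m,\theta}=\pi_\theta(B_m)\cap J_{m,\theta}
\]
is non-Borel. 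Boundedness is arranged by scaling the $m$-th piece to diameter $2^{-m}$ and placing the pieces at points converging to a limit, with the gaps chosen carefully.

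The basic building block uses a Borel (indeed $G_\delta$) set $G\subset [0,1]\times\Cs$ whose first-coordinate projection $A$ is analytic but not Borel; this is Suslin's result. We embed $\Cs$ as a Cantor set $C\subset\R$ with very rapidly shrinking gaps. The goal is a set $E\subset\R^2$ with the following property: for every direction $\theta$ in a small arc around the horizontal, the map $\pi_\theta$ restricted to $E$ is the projection onto $A$ composed with a homeomorphism. Concretely, take $E=\{(x,\varepsilon c):(x,c)\in G'\}$, where $G'$ is $G$ transported into $[0,1]\times C$. Projecting along a direction close to vertical then gives
\[
x\mapsto x+t\,\varepsilon c .
\]
This perturbation destroys the fibre structure, so a "spread" trick is needed. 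We replace $x$ by a coordinate living on a Cantor set $K$ whose gaps are huge compared with $\varepsilon\cdot\operatorname{diam} C$. Then $x+t\varepsilon c$ still determines $x\in K$, via the gap structure, continuously and injectively in $(x,c)$. As a result, $\pi_\theta(E)$ is the image of $A\times\{\text{stuff}\}$ under a Borel isomorphism onto its range, and its intersection with a suitable Borel set recovers a homeomorphic copy of $A$. Hence it is non-Borel. This works uniformly for all $\theta$ in an arc of small width $\delta$. Finitely many rotated copies, say $m=1,\dots,N$ with $N\delta>\pi$, cover all directions. Countably many copies are not even needed, so we can take a finite union placed far apart, which keeps $\cB$ bounded.

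The main obstacle is the separation step. We must show that for every direction $\theta$, the projections of the other rotated copies do not cover or hit the region where $\pi_\theta(B_m)$ is non-Borel. Since the copies are placed in different locations, their projections are intervals that may overlap for some directions, and we cannot choose the locations so that all projections are pairwise disjoint for all $\theta$. Two copies project onto overlapping sets exactly for directions close to perpendicular to the line joining them. The plan is to handle this by choosing the pieces so that each $\pi_\theta(B_m)$ for $\theta\in I_m$ has its non-Borel part spread densely in a Cantor-like fashion, with copies at many scales. Then every open interval meeting $\pi_\theta(B_m)$ contains a non-Borel relative piece. We would then use a Baire-category or measure argument to show that finitely many other projections, each a compact-closure-nowhere-dense set (the images of Cantor-type sets), cannot cover all of these pieces, so some interval avoiding them survives. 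Making the other pieces' projections nowhere dense in every direction is what I would attempt first. It would follow from building everything inside a product of two Cantor sets of small dimension, whose projections in all directions are nowhere dense (indeed Lebesgue null when the dimension is less than $1$). This is the step I expect to need the most care.
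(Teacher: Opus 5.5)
\textbf{There is a genuine gap in the building block, which is the core of the problem.} You want a set $E$ on which $\pi_\theta$ ``is the projection onto $A$ composed with a homeomorphism'' for a whole arc of directions. That requires $\pi_\theta$ to identify the points $(x,\varepsilon c)$ and $(x,\varepsilon c')$ of each fibre. For $c\neq c'$ this happens in exactly one direction, so it cannot hold on an arc. Fibres with at least two points are also unavoidable: if $\mathrm{pr}_1$ were injective on $G'$, then $A$ would be Borel by Fact B.

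Your ``spread trick'' does not repair this; it defeats itself. If $s=x+t\varepsilon c$ determines $x$, it also determines $c=(s-x)/(t\varepsilon)$. Then $(x,c)\mapsto x+t\varepsilon c$ is a Borel injection on the Borel set $G'$, and Fact B makes $\pi_\theta(E)$ Borel, so no intersection with a Borel set can recover $A$. Moreover, a perfect $K$ has points at arbitrarily small distances, so ``gaps huge compared with $\varepsilon\operatorname{diam}C$'' cannot hold. Interleaving the digit scales of $K$ and $C$ would make $s$ determine $x$, but that is exactly the injectivity that makes the projection Borel.

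The missing idea is a mechanism that collapses an uncountable set of witnesses in a continuum of directions at once, with each direction collapsing its own witnesses without polluting the others. The paper does this in the dual plane. A point $b=(x,y)$ is read as the line $s=x+ty$, and $B_0=\Gamma(T\times F)$ consists of the lines of slope $\mu(t,d,\omega)$ through $(t,u(t,d))$. For fixed $(t,d)$ all these points have $\pi_t$-value $u(t,d)$; this is the collapse in direction $t$. Because the slopes are \emph{private}, each such line meets $\Sigma$ only at its base point. Privacy gives both $\pi_t(B_0)\cap u(t,D)=u(t,A)$ and injectivity of $\Gamma$, hence Borelness of $B_0$ via Fact B. Building a Borel, $\omega$-injective family of private slopes is the real work: the jump function $g$, Lemma~\ref{lem:jump}, and Lemma~\ref{lem:select}. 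It is carried out only for irrational $t$, i.e.\ off a countable exceptional set, not uniformly on an arc.

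\textbf{The separation step you single out is the easy part, and your Baire-category route does not work as stated.} Nowhere density of the other projections does not prevent them from containing $\pi_\theta(B_m)$. If all pieces sit inside a product of thin Cantor sets, every projection is a compact null set, and one can lie inside another. A dense open complement then need not meet the non-Borel part at all; you would need relative nowhere density inside $\overline{\pi_\theta(B_m)}$, which nothing in your plan provides.

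Lemma~\ref{lem:arc} uses redundancy instead:
\begin{enumerate}[label=(\roman*)]
\item choose $N$ rotations so that every direction lies in at least four arcs $J+\alpha_k$;
\item perturb the angles so that the countable exceptional sets $E+\alpha_k$ are pairwise disjoint;
\item translate by $\lambda c_k$, with pairwise nonparallel differences $c_k-c_l$ and $\lambda$ large, so that for each $\theta$ at most one pair of projection intervals overlaps.
\end{enumerate}
Then for each direction some index is both non-exceptional and isolated. Intersecting $p_\theta(\cB)$ with that copy's closed interval gives a translate of a non-Borel set.
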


Our proof begins with a Borel set whose projection onto one coordinate
is non-Borel. We encode this projection simultaneously in a family of
directions. To keep the copies from interfering, we work in a second
plane: a point $(x,y)$ represents the line $s=x+ty$, so projection in
direction $t$ becomes intersection with the vertical line at $t$.
We construct a set $\Sigma$ with Cantor vertical sections such that
through each point of $\Sigma$ there passes a Cantor family of lines meeting
$\Sigma$ at that point only. A decreasing function with carefully chosen
jumps supplies these lines. The construction first gives non-Borel
projections for all irrational $t\in(2,3)$. Finitely many rotated and
translated copies then give the result in every direction.

In Section~\ref{sec:framework} we provide the descriptive-set-theoretic facts that we will further need and
the geometric formulation of the main question. Sections~\ref{sec:function}
and~\ref{sec:slopes} construct the required families of lines.
The non-Borel set is encoded in the proof of the theorem in
Section~\ref{sec:main-proof}. The finite geometric construction and the
choice of parameters are given in the appendices.

\subsection*{Acknowledgments}
The author is grateful to Ioann Vasilyev for motivation, for introducing
him to the problem, and for helpful corrections and guidance.

\subsection*{AI use}
The author acknowledges the use of Claude Opus 5.5 in a guided way. The author is responsible for the content of the paper. 

\section{The encoding and the geometric formulation}\label{sec:framework}

\subsection{Two facts about Borel sets}
A Polish space is a separable completely metrizable topological space.
We use Polish spaces $\R$, $\R^2$, the Cantor space $\Cs$ of binary sequences with
the product topology, and their finite products. By a \emph{Cantor set}
we mean a space homeomorphic to $\Cs$; subsets of $\R$ are given the
subspace topology. A subset of a Polish space is called \emph{analytic} if it is a
continuous image of a Borel subset of a Polish space.

We will further need the following two classical facts.

\begin{factA}[Suslin; see~{\cite[Thm.~14.2, Ex.~14.3]{Ke}}]
For every Cantor set $D\subset\R$, there are a non-Borel set $A\subset D$
and a Borel set $F\subset D\times\Cs$ such that
\[
A=\{d\in D:\text{there exists }\omega\in\Cs\text{ with }(d,\omega)\in F\}.
\]
\end{factA}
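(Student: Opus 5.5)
Fact A is classical, and my plan is to reduce it to the standard case of the Cantor space itself by transporting along a homeomorphism. Since $D$ is a Cantor set, fix a homeomorphism $h\colon \Cs\to D$. The burden then shifts to producing, in $\Cs$, a non-Borel analytic set $A_0\subset\Cs$ together with a Borel (in fact closed) set $F_0\subset\Cs\times\Cs$ such that $A_0$ is the projection of $F_0$ onto the first coordinate. Once these are in hand, I will set $A=h(A_0)$ and $F=(h\times\mathrm{id})(F_0)$. Because $h\times\mathrm{id}$ is a homeomorphism of $\Cs\times\Cs$ onto $D\times\Cs$, the set $F$ is Borel in $D\times\Cs$. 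Moreover, $d\in A$ holds iff $h^{-1}(d)\in A_0$, which holds iff some $\omega$ satisfies $(h^{-1}(d),\omega)\in F_0$, which in turn holds iff $(d,\omega)\in F$. Finally, $A$ is non-Borel in $D$, since $h$ preserves Borel sets in both directions.

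For the case of $\Cs$, I will follow the usual universal-set diagonal argument. The first step is to construct a closed set $U\subset\Cs\times(\Cs\times\Cs)$ that is universal for closed subsets of $\Cs\times\Cs$. To do this, code a closed set by the set of basic clopen rectangles it avoids: enumerate the basic open sets $(V_n)$ of $\Cs\times\Cs$, and put $(x,z)\in U$ iff $z\notin V_n$ for every $n$ with $x(n)=1$. This set is closed, and every closed set appears as some section $U_x$. Projecting away the last coordinate gives an analytic set $P\subset\Cs\times\Cs$ that is universal for analytic subsets of $\Cs$. Here I use the standard fact that every analytic subset of $\Cs$ is the projection of a closed subset of $\Cs\times\Cs$. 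That fact follows because every analytic set is a continuous image of $\N^\N$, hence the projection of a closed subset of $\Cs\times\N^\N$, and $\N^\N$ embeds into $\Cs$ as a $G_\delta$ set.

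Next I diagonalize by setting $A_0=\{x:(x,x)\in P\}$. This set is analytic, being the projection of the closed set $F_0=\{(x,\omega):(x,x,\omega)\in U\}$. To see that $A_0$ is not Borel, suppose it were. Then $\Cs\setminus A_0$ would be Borel and hence analytic, so it would equal $P_{x_0}$ for some $x_0$. This gives $x_0\in A_0 \iff (x_0,x_0)\in P \iff x_0\in P_{x_0}=\Cs\setminus A_0$, which is a contradiction.

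The step I expect to be the main obstacle is the normal-form step: showing that every analytic (in particular every Borel) subset of $\Cs$ is the projection of a closed subset of $\Cs\times\Cs$. This is where the real work lies, because it rests on showing that Borel sets are continuous images of closed subsets of $\N^\N$, and on replacing $\N^\N$ by a $G_\delta$ subset of $\Cs$. The $G_\delta$ condition can be absorbed by noting that countable intersections are handled by the projection anyway. Since the paper cites \cite[Thm.~14.2, Ex.~14.3]{Ke}, I would invoke these results rather than reprove them.
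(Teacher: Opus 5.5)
Your normal-form step is false as stated, and the error carries over to the diagonal argument. A closed subset of $\Cs\times\Cs$ is compact, so its projection to the first factor is compact. The projections of closed subsets of $\Cs\times\Cs$ are therefore exactly the closed subsets of $\Cs$, not all analytic sets.

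Your own justification shows where this breaks. Embedding $\N^\N$ into $\Cs$ as the $G_\delta$ set of sequences with infinitely many ones turns a closed subset of $\Cs\times\N^\N$ into a $G_\delta$ subset of $\Cs\times\Cs$, not a closed one. There is no way to ``absorb'' the $G_\delta$ condition, since projection does not commute with countable intersections.

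Concretely, your $P$ is the projection along a compact factor of the closed set $U\subset\Cs\times(\Cs\times\Cs)$. So $P$ is itself closed, and it is universal only for closed subsets of $\Cs$. Hence $A_0=\{x:(x,x)\in P\}$ is closed, thus Borel. The step ``$\Cs\setminus A_0$ is analytic, so it equals some $P_{x_0}$'' fails; the diagonalization only shows that $A_0$ is not open.

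The parenthetical ``Borel (in fact closed)'' can never be realized: if $F\subset D\times\Cs$ were closed, $F$ would be compact and so would $A$. This is exactly why Fact A asks only for a Borel $F$.

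The repair is to keep $\N^\N$ as the witness space throughout and pass to $\Cs$ only at the end.
\begin{itemize}
\item Your coding of closed sets by the basic open sets they avoid works verbatim in $\Cs\times\N^\N$. It gives a closed $U\subset\Cs\times(\Cs\times\N^\N)$ universal for closed subsets of $\Cs\times\N^\N$.
\item Its projection along $\N^\N$ is then genuinely universal for analytic subsets of $\Cs$.
\item The diagonal set $A_0$ is analytic and non-Borel. It is the projection of the closed set $F_1=\{(x,\omega):(x,x,\omega)\in U\}\subset\Cs\times\N^\N$.
\item Let $\varphi$ be a homeomorphism of $\N^\N$ onto the $G_\delta$ set $G\subset\Cs$. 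Then $(\mathrm{id}\times\varphi)(F_1)$ is relatively closed in $\Cs\times G$, hence $G_\delta$ and Borel (but in general not closed) in $\Cs\times\Cs$, and its projection is $A_0$.
\end{itemize}

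With this correction your argument coincides with the paper's. The paper cites the existence of an analytic non-Borel subset of $D$ and its representation as the projection of a closed subset of $D\times\N^\N$, then performs the same $G_\delta$ transfer. Your preliminary transport along $h\colon\Cs\to D$ is harmless but not needed.
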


Indeed, $D$ contains an analytic non-Borel set $A$, and $A$ is the
projection of a closed subset of $D\times\N^{\N}$. The Baire space
$\N^{\N}$ is homeomorphic to the $G_\delta$ subset of $\Cs$ consisting
of sequences with infinitely many ones. Under this homeomorphism the
closed set becomes a Borel subset of $D\times\Cs$.

\begin{factB}[Lusin--Suslin; see~{\cite[Cor.~15.2]{Ke}}, {\cite[\S\,4.5]{Sr}}]
Let $X,Y$ be Polish spaces, let $S\subset X$ be Borel, and let
$f\colon S\to Y$ be a Borel injective map. Then $f(S)$ is Borel.
\end{factB}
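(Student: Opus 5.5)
The plan is the classical one: first reduce to a continuous injection defined on a closed subset of the Baire space, and then use the Lusin separation theorem to cut $f(S)$ into countably many pairwise disjoint Borel pieces along a tree, which lets us recover $f(S)$ as a countable Boolean combination of Borel sets.

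\emph{Reduction.} The set $\operatorname{graph}(f)=\{(x,f(x)):x\in S\}$ is a Borel subset of $X\times Y$, because $f$ is Borel and $S$ is Borel. Every Borel subset of a Polish space is the continuous bijective image of a closed subset of $\N^{\N}$. Fix such a closed set $C\subset\N^{\N}$ and a continuous bijection $h\colon C\to\operatorname{graph}(f)$. Set $g=\pi_Y\circ h\colon C\to Y$. Then $g$ is continuous and injective, since $f$ is injective, and $g(C)=f(S)$. So it suffices to show that a continuous injective image $g(C)$ of a closed $C\subset\N^{\N}$ is Borel.

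\emph{The separation step.} For each finite sequence $s$, let $C_s=C\cap N_s$, where $N_s$ is the basic clopen set determined by $s$. Each $g(C_s)$ is analytic, and by injectivity the sets $g(C_{s^\frown i})$, $i\in\N$, are pairwise disjoint. The Lusin separation theorem states that disjoint analytic sets are separated by a Borel set. Applying it countably many times gives pairwise disjoint Borel sets $B'_{s^\frown i}\supset g(C_{s^\frown i})$. Define recursively
\[
B_\emptyset=Y,\qquad B_{s^\frown i}=B'_{s^\frown i}\cap B_s\cap\overline{g(C_{s^\frown i})}.
\]
Then the following hold:
\begin{itemize}
\item each $B_s$ is Borel;
\item $g(C_s)\subset B_s\subset\overline{g(C_s)}$;
\item the $B_s$ decrease along branches;
\item siblings $B_{s^\frown i}$ are pairwise disjoint.
\end{itemize}

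\emph{Identification.} The claim is that
\[
g(C)=\bigcap_{n}\bigcup_{|s|=n}B_s,
\]
which is a Borel set. The inclusion $\subset$ is immediate from $g(C_s)\subset B_s$. For $\supset$, let $y$ lie in the right-hand side. By disjointness of siblings and nestedness, for each $n$ there is a unique $s$ of length $n$ with $y\in B_s$, and these $s$ cohere into a branch $\alpha\in\N^{\N}$. Since $y\in\overline{g(C_{\alpha|n})}$, each $C_{\alpha|n}$ is nonempty, so $\alpha\in C$ because $C$ is closed. Continuity of $g$ at $\alpha$ gives $\operatorname{diam}\overline{g(C_{\alpha|n})}\to0$, and $y$ lies in each of these closures. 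Hence $y=g(\alpha)\in g(C)$.

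\emph{Main obstacle.} The substantive ingredient is the Lusin separation theorem, which I would prove by the standard contradiction argument. Call a pair of sets \emph{inseparable} if no Borel set separates them; the key fact is that if $\bigcup_m P_m$ and $\bigcup_n Q_n$ are inseparable, then some pair $(P_m,Q_n)$ is inseparable. Write the two analytic sets as continuous images of $\N^{\N}$. Assuming they are inseparable, descend through basic neighborhoods to obtain branches $\alpha,\beta$ such that the images of all $N_{\alpha|k}$ and $N_{\beta|k}$ stay inseparable. If $\alpha$ and $\beta$ had distinct images, disjoint open neighborhoods of these two points would separate the images of small basic sets, a contradiction; but equal images contradict disjointness. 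The remaining bookkeeping in the recursion above is routine.
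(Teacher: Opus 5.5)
Your proof is correct and is the classical argument behind the result the paper quotes (Kechris, Cor.~15.2): pass to the graph, reduce to a continuous injection on a closed subset of $\N^{\N}$, and apply Lusin separation along the tree. The paper itself does not prove Fact~B but only cites it, and your one slip is cosmetic: with $B_\emptyset=Y$ the inclusion $B_\emptyset\subset\overline{g(C)}$ may fail, so either take $B_\emptyset=\overline{g(C)}$ or assert $B_s\subset\overline{g(C_s)}$ only for $s\neq\emptyset$, which is all the identification step uses.
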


For the classical theory of analytic sets, see also~\cite{Lu}.

\subsection{Reduction to an arc}
Write $e_\theta=(\cos\theta,\sin\theta)$ and
$p_\theta(z)=\ip{z}{e_\theta}$. After identifying the target line with
$\R$, orthogonal projection coincides with $p_\theta$. Directions are considered
modulo $\pi$. For nonvertical directions it is convenient to use
\[
\pi_t(x,y)=x+ty.
\]
If $t=\tan\theta$, then $p_\theta=\cos\theta\,\pi_t$; multiplication
by a nonzero constant preserves Borelness.

The following finite construction allows us to restrict attention to a
small interval of parameters. Its proof is in Appendix~\ref{app:directions}.

\begin{lemma}\label{lem:arc}
Let $B_0\subset\R^2$ be a bounded Borel set, let
$J\subset\R/\pi\Z$ be a nonempty open arc, and let $E\subset J$ be
countable. If $p_\theta(B_0)$ is non-Borel for every
$\theta\in J\setminus E$, then there exists a set $\cB$ as in
Theorem~\ref{thm:main}.
\end{lemma}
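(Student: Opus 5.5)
The plan is to take $\cB$ to be a finite union of rotated and translated copies of $B_0$. The rotations make the arcs of good directions cover the whole circle $\R/\pi\Z$ several times. The translations spread the copies so far apart that, in each direction, at least one good copy has a projection that is isolated from the projections of all the others.

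First I record how projections transform. Let $\rho_\alpha$ be rotation by $\alpha$ and choose $r$ with $B_0$ contained in the disc of radius $r$ about $0$. Then
\[
p_\theta(\rho_\alpha B_0+v)=p_{\theta-\alpha}(B_0)+p_\theta(v).
\]
Hence this set is non-Borel whenever $\theta\in(\alpha+J)\setminus(\alpha+E)$, and it lies in $[p_\theta(v)-r,\,p_\theta(v)+r]$.

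\textbf{Choice of rotations.} By compactness, choose finitely many angles $\alpha_1,\dots,\alpha_n$ such that every $\theta\in\R/\pi\Z$ lies in at least four of the arcs $\alpha_i+J$. The set of $\alpha$ with $(\alpha+E)\cap(\alpha'+E)\neq\emptyset$ is countable, since it is contained in $E-E+\alpha'$. So I can perturb the $\alpha_i$ one by one, keeping the fourfold covering (which is an open condition after a slight shrinking), so that the sets $\alpha_i+E$ are pairwise disjoint.

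\textbf{Choice of translations.} Pick points $w_1,\dots,w_n$ such that the directions of the differences $w_j-w_i$, taken modulo $\pi$ over unordered pairs $\{i,j\}$, are pairwise distinct. Set $v_i=\lambda w_i$ with $\lambda$ large. Let $N_{ij}$ be the set of $\theta$ with $|\langle v_j-v_i,e_\theta\rangle|\le 2r$. This is an arc around the direction perpendicular to $w_j-w_i$, of length $O(r/\lambda)$. For $\lambda$ large enough, the arcs $N_{ij}$ for distinct pairs are pairwise disjoint.

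Define
\[
\cB=\bigcup_{i=1}^n(\rho_{\alpha_i}B_0+v_i),
\]
which is a bounded Borel set.

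\textbf{Each direction has a good copy.} Fix $\theta$. At most one index $i$ has $\theta\in\alpha_i+E$, and $\theta$ lies in at most one arc $N_{jk}$, which excludes at most two indices. Among the at least four $i$ with $\theta\in\alpha_i+J$, some $i$ therefore satisfies all of the following:
\begin{itemize}
\item $\theta\in(\alpha_i+J)\setminus(\alpha_i+E)$;
\item $\theta\notin N_{ij}$ for every $j\ne i$.
\end{itemize}
For this $i$, let $I=[p_\theta(v_i)-r,\,p_\theta(v_i)+r]$. For $j\neq i$, the condition $\theta\notin N_{ij}$ gives $|p_\theta(v_j)-p_\theta(v_i)|>2r$, so $p_\theta(\rho_{\alpha_j}B_0+v_j)$ is disjoint from $I$. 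Hence
\[
p_\theta(\cB)\cap I=p_\theta(\rho_{\alpha_i}B_0+v_i),
\]
and this set is non-Borel. Consequently $p_\theta(\cB)$ is not Borel.

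The main obstacle is the bookkeeping in the last step. The copy witnessing non-Borelness must avoid both its own countable exceptional set and interference from every other copy. The fourfold covering, the disjointness of the sets $\alpha_i+E$, and the disjointness of the arcs $N_{ij}$ are chosen precisely so that a single counting argument handles both requirements.
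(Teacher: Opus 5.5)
Your proposal is correct and follows the paper's proof essentially step by step. It uses rotated copies that cover every direction at least four times, with the translates $\alpha_i+E$ made pairwise disjoint by avoiding countably many values, translations $\lambda w_i$ with pairwise nonparallel differences so that the overlap arcs $N_{ij}$ are disjoint, and the same count (four indices, at most one exceptional, at most two in an overlap arc) to find an isolated good copy. The only cosmetic difference is how stability of the fourfold cover under perturbation is justified: you use a Lebesgue-number (shrinking) argument, while the paper uses equally spaced angles $k\pi/N$ with $N|J|\ge 6\pi$ and perturbations smaller than $\pi/(4N)$.
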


We shall construct $B_0$ such that $\pi_t(B_0)$ is non-Borel for every
$t\in T$, where
\[
T=(2,3)\setminus\Q.
\]
Lemma~\ref{lem:arc} then applies with
$J=\{\arctan t:2<t<3\}$ and
$E=\{\arctan t:t\in\Q\cap(2,3)\}$.

\subsection{Private slopes}
To a point $b=(x,y)$ associate the line
\[
\ell_b=\{(t,s)\in\R^2:s=x+ty\}.
\]
Its intersection with the vertical line at $t$ has ordinate $\pi_t(b)$.
The line of slope $m$ through $(t_0,s_0)$ corresponds to
$b=(s_0-mt_0,m)$.

\begin{definition}
For $\sigma\in\Sigma\subset\R^2$, a number $m$ is called a
\emph{private slope} of $\sigma$ if the line of slope $m$ through
$\sigma$ meets $\Sigma$ only at $\sigma$.
\end{definition}

We will construct a Cantor set $D$, a bounded Borel function
$u\colon T\times D\to\R$ injective in the second variable, and a Borel map
\[
\mu\colon T\times D\times\Cs\longrightarrow[\tfrac12,1]
\]
injective in the third variable, such that, for all $t\in T$, $d\in D$,
and $\omega\in\Cs$, the value $\mu(t,d,\omega)$ is a private slope
of $(t,u(t,d))$ in
\[
\Sigma=\{(t,u(t,d)):t\in T,\ d\in D\}.
\]
The variable $d$ will carry the set $A$ from Fact~A; the variable
$\omega$ will carry its Borel representation $F$. Private slopes
ensure that a line used for one point of $\Sigma$ does not add a point
to any other vertical section of $\Sigma$.

\begin{figure}[ht]
\centering
\begin{tikzpicture}[scale=1.1,>=stealth]
  \draw[->] (0,0) -- (6.4,0) node[below] {$t$};
  \draw[->] (0,0) -- (0,3.4) node[left] {$s$};
  \foreach \x/\o/\n in {1/0.95/t_1,3/1.0/t_0,5/0.6/t_2} {
    \draw[dashed,gray] (\x,0) -- (\x,3.2);
    \node[below] at (\x,0) {$\n$};
    \foreach \d in {0,0.06,0.15,0.21,0.9,0.96,1.05,1.11}
      \fill (\x,\o+\d) circle (0.9pt);
  }
  \draw[thick] (0.2,1.21) -- (5.8,2.89);
  \fill[white] (3,2.05) circle (2.2pt);
  \draw (3,2.05) circle (2.2pt);
  \node[above left] at (3,2.05) {$\sigma$};
  \node[right] at (5.05,1.4) {\small$\Sigma\cap\{t=t_2\}$};
\end{tikzpicture}
\caption{A line with a private slope through $\sigma$.
Only three vertical sections of $\Sigma$ are shown schematically.}
\label{fig:windows}
\end{figure}

\section{The Cantor set and the jump function}\label{sec:function}

\subsection{Parameters and their roles}
We assign a positive weight to each rational number in
$\cR=\Q\cap(1,4)$. Choose integers
$1=Q_1<Q_2<\cdots$, with $Q_2=2$. A reduced fraction $r=p/q\in\cR$ by definition
has \emph{level} $i$ if $Q_i\le q<Q_{i+1}$. Let $n_i$ be the number
of fractions of level $i$, and give each of them weight $w_r=w_i$.
For a fixed $j$, fractions of levels at most $j$ are called heavy;
the others are called light. Set
\[
\lambda_j=\sum_{i>j}w_i n_i,\qquad L_0=1,\qquad
\ell_j=L_{j-1}/6.
\]

The numbers $\delta_j$ will be the digit sizes of $D$. The number
$\lambda_j$ controls the total light weight. The integers $Q_i$ ensure
that fractions of level $j+1$ occur in sufficiently short intervals.
Finally, $L_j$ is the length of a slope interval on which the $j$-th
avoidance condition holds. We require, for every $j\ge1$,
\begin{enumerate}[label=(P\arabic*)]
\item\label{P1}
$1=w_1>w_2>\cdots$, $\delta_1=1/100$, and
$\delta_{j+1}\le\min(\delta_j,w_{j+1})/100$;
\item\label{P2}
$\lambda_j\le\delta_j/100$ and
$\lambda_{j+1}\le\delta_j\ell_j/100$;
\item\label{P3}
$w_{j+1}\le\delta_j\ell_j/16$;
\item\label{P4}
$Q_{j+2}\ge200/(\delta_j\ell_j)$;
\item\label{P5}
$L_j=\min(\ell_j/2,w_{j+1}/(3\delta_j))$.
\end{enumerate}
Appendix~\ref{app:params} gives a recursive choice satisfying all these
conditions. We shall use the following consequences:
\begin{align}
&w_r\ge w_j\ge100\delta_j, \; \text{for every heavy fraction }r,\label{eq:heavy}\\
&4\delta_{j+1}\le\delta_j\ell_j/400, \; L_j\le\ell_j/48.\label{eq:small}
\end{align}
The first estimate follows from $\delta_j\le w_j/100$.
For the second line, use first $\delta_{j+1}\le w_{j+1}/100$ and~\ref{P3}, and
then~\ref{P5}.

\subsection{The digit scales}
Define
\[
D=\Bigl\{x\in \R: \; \exists (a_j)_{j\geq 1}\in \{0,1\}^{\N}, x=\sum_{j\ge1}a_j\delta_j\Bigr\}.
\]
Since $\sum_{i>j}\delta_i<2\delta_{j+1}<\delta_j$, distinct digit
sequences give distinct points. The continuous map from $\Cs$ onto
$D$ is therefore a homeomorphism. In particular, $D$ is a Cantor set.
For each $j\ge1$, the digit map
\[
a_j\colon D\longrightarrow\{0,1\},\qquad d\longmapsto a_j(d),
\]
is continuous when $D$ carries the subspace topology inherited from
$\R$ and $\{0,1\}$ carries the discrete topology. Indeed, under the
homeomorphism between $D$ and $\Cs$, this map is the projection onto
the $j$-th coordinate.

Set
\[
\alpha_j:=\delta_j-2\delta_{j+1},\qquad
\beta_j:=\delta_j+2\delta_{j+1},\qquad
H_j:=[\alpha_j,\beta_j].
\]
Thus $\alpha_j\ge0.98\delta_j$, $\beta_j<2\delta_j$, and
$\beta_j-\alpha_j=4\delta_{j+1}$. If $j$ is the first digit at which
$d,d_0\in D$ differ, then
\begin{equation}\label{eq:digits}
d_0-d\in
\begin{cases}
-H_j,&a_j(d_0)=0,\\
 H_j,&a_j(d_0)=1.
\end{cases}
\end{equation}
Indeed, the first differing digit contributes $\pm\delta_j$, and the
absolute value of the remaining sum is less than $2\delta_{j+1}$.

\subsection{The jump function and its increments}
Define, for $t\in(1,4)$,
\[
g(t)=-\sum_{r\in\cR,\ r<t}w_r,\qquad
u(t,d)=d+g(t)\quad(t\in T,\ d\in D).
\]
The weights have finite total sum by~\ref{P2}. Hence $g$ is a bounded
Borel function. It is non-increasing, has a downward jump of size $w_r$
at each rational $r$, and is continuous at each irrational point.
The last assertion follows by first retaining finitely many jumps and
then making the remaining total weight arbitrarily small.
Each vertical section $u(t,D)=D+g(t)$ is a translate of $D$.

Let $W(a,b)$ be the sum of the weights strictly between $a$ and $b$,
in either order. Let $W[a,b]$ include the endpoints as well.
For a fixed $t_0\in T$ and $m\in[\tfrac12,1]$, put
\begin{gather*}
\Fm(t)=m|t-t_0|+W(t_0,t)\quad(t\in(1,4)\setminus\Q),\\
\Fl(r)=m|r-t_0|+W(t_0,r),\qquad
\Fr(r)=\Fl(r)+w_r\quad(r\in\cR).
\end{gather*}
Here $\Fm(t_0)=0$. We suppress $t_0$ in this notation.
The values $\Fl(r)$ and $\Fr(r)$ are the limits through irrational
points approaching $r$ from the side nearer to $t_0$ and from the
farther side, respectively; the signs do not refer to left and right.
For irrational $t$,
\begin{equation}\label{eq:H}
g(t)-g(t_0)-m(t-t_0)=
\begin{cases}
-\Fm(t),&t>t_0,\\
 \Fm(t),&t<t_0.
\end{cases}
\end{equation}

On either side of $t_0$, $\Fm$ strictly increases with distance from
$t_0$. More precisely, if $t,r,t'$ lie on the same side,
$t,t'$ are irrational, and
$|t-t_0|<|r-t_0|<|t'-t_0|$, then
\begin{equation}\label{eq:mono}
\Fm(t)<\Fl(r)<\Fr(r)<\Fm(t'),
\qquad \Fm(t)\ge\tfrac12|t-t_0|.
\end{equation}
The successive differences in the chain are
$m|r-t|+W(t,r)$, $w_r$, and $m|t'-r|+W(r,t')$, all positive.

\section{Construction of private slopes}\label{sec:slopes}

\subsection{An avoidance condition at each digit scale}
Fix $t_0\in T$ and $d_0\in D$. At level $j$, the relevant side of
$t_0$ is $(t_0,4)$ if $a_j(d_0)=0$, and $(1,t_0)$ if
$a_j(d_0)=1$. We say that a slope $m$ \emph{jumps over level $j$}
if there is a rational $r$ on this side such that
\begin{equation}\label{eq:skip}
\Fl(r)<\alpha_j,\qquad \Fr(r)>\beta_j.
\end{equation}
Thus the jump at $r$ covers the whole interval $H_j$.

\begin{figure}[ht]
\centering
\begin{tikzpicture}[scale=1.05,>=stealth]
  \fill[gray!15] (0,1.3) rectangle (6,1.8);
  \draw[->] (0,0) -- (6.3,0) node[below] {$t$};
  \draw[->] (0,0) -- (0,3) node[left] {$\Fm$};
  \draw[dashed] (0,1.3) -- (6,1.3);
  \draw[dashed] (0,1.8) -- (6,1.8);
  \node[left] at (0,1.3) {$\alpha_j$};
  \node[left] at (0,1.8) {$\beta_j$};
  \node at (5.6,1.55) {$H_j$};
  \draw[thick] (0,0) -- (1.2,0.45) (1.2,0.6) -- (2.4,1.05);
  \draw[thick] (2.4,1.1) -- (3,1.15);
  \draw[thick] (3,2.15) -- (4.6,2.75);
  \draw[dotted] (3,1.15) -- (3,2.15);
  \fill (3,1.15) circle (1.2pt);
  \fill (3,2.15) circle (1.2pt);
  \node[right] at (3,1.0) {\small$\Fl(r)$};
  \node[left] at (3,2.25) {\small$\Fr(r)$};
  \draw (3,0.05) -- (3,-0.05) node[below] {$r$};
  \node[below] at (0,0) {$t_0$};
\end{tikzpicture}
\caption{A jump covering $H_j$, shown on the right of $t_0$.
The graph is schematic.}\label{fig:jump}
\end{figure}

\begin{lemma}\label{lem:private}
If $m\in[\tfrac12,1]$ jumps over every level for $(t_0,d_0)$, then
$m$ is a private slope of $(t_0,u(t_0,d_0))$ in $\Sigma$.
\end{lemma}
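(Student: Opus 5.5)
We argue by contradiction. Suppose the line of slope $m$ through $\sigma_0=(t_0,u(t_0,d_0))$ meets $\Sigma$ at another point $(t,u(t,d))$ with $t\in T$, $t\neq t_0$, $d\in D$. Because the line has slope $m$, we get $u(t,d)-u(t_0,d_0)=m(t-t_0)$, that is, $d-d_0=-(g(t)-g(t_0)-m(t-t_0))$. By~\eqref{eq:H} this means
\[
d_0-d=-\Fm(t)\ \text{ if } t>t_0,\qquad d_0-d=\Fm(t)\ \text{ if } t<t_0 .
\]
Since $\Fm(t)\ge\tfrac12|t-t_0|>0$ by~\eqref{eq:mono}, we have $d\ne d_0$. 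Let $j$ be the first digit at which $d$ and $d_0$ differ. The plan is to compare the sign and the size of $d_0-d$ given by~\eqref{eq:digits} with the identity above. Then we use the jump over level $j$ to rule out $\Fm(t)\in H_j$.

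\emph{Sign.} If $a_j(d_0)=0$, then $d_0-d\in -H_j$ is negative, so $t>t_0$. If $a_j(d_0)=1$, then $d_0-d\in H_j$ is positive, so $t<t_0$. In both cases $t$ lies on the relevant side of $t_0$ for level $j$, as defined before~\eqref{eq:skip}. This sign bookkeeping is the step to check most carefully, because the side convention must match~\eqref{eq:digits} and~\eqref{eq:H} exactly.

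\emph{Size.} In both cases $|d_0-d|=\Fm(t)$, so $\Fm(t)\in H_j=[\alpha_j,\beta_j]$. Now apply the hypothesis that $m$ jumps over level $j$. This gives a rational $r$ on the same side of $t_0$ as $t$ with $\Fl(r)<\alpha_j$ and $\Fr(r)>\beta_j$. Since $t$ is irrational, $t\neq r$.

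If $|t-t_0|<|r-t_0|$, then~\eqref{eq:mono} gives $\Fm(t)<\Fl(r)<\alpha_j$. If $|t-t_0|>|r-t_0|$, then~\eqref{eq:mono} gives $\Fm(t)>\Fr(r)>\beta_j$. Either case contradicts $\Fm(t)\in H_j$.

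Hence no second intersection point exists, and $m$ is a private slope. Note that points of $\Sigma$ on the vertical line $t=t_0$ are automatically excluded, since a line of finite slope meets that vertical line only at $\sigma_0$.
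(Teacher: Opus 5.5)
Your proof is correct and follows the paper's argument step for step. You write an intersection as $d_0-d=\pm\Fm(t)$ via \eqref{eq:H}, exclude $d=d_0$ by \eqref{eq:mono}, and use \eqref{eq:digits} at the first differing digit to put $t$ on the relevant side with $\Fm(t)\in H_j$; the jump at $r$ together with \eqref{eq:mono} then rules this out, and your explicit sign check spells out what the paper compresses into one sentence.
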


\begin{proof}
An intersection of the line in question with $(t,u(t,d))\in\Sigma$
requires
\begin{equation}\label{eq:intersection}
g(t)-g(t_0)-m(t-t_0)=d_0-d.
\end{equation}
For $t=t_0$, this gives $d=d_0$. If $t\ne t_0$ and $d=d_0$,
the left-hand side is nonzero by~\eqref{eq:H} and~\eqref{eq:mono}.
It remains to consider $t\ne t_0$, $d\ne d_0$.

Let $j$ be the first differing digit. Equations~\eqref{eq:digits}
and~\eqref{eq:H} show that~\eqref{eq:intersection} can hold only
on the relevant side of $t_0$, and only if $\Fm(t)\in H_j$.
Choose $r$ as in~\eqref{eq:skip}. Since $t$ is irrational, it lies
either nearer to $t_0$ than $r$ or farther away. In the first case,
\eqref{eq:mono} gives $\Fm(t)<\Fl(r)<\alpha_j$; in the second,
it gives $\Fm(t)>\Fr(r)>\beta_j$. Thus~\eqref{eq:intersection}
has no solution other than $(t,d)=(t_0,d_0)$.
\end{proof}

\subsection{The jumping lemma}
The next lemma allows us to impose one more avoidance condition inside
any slope interval of the prescribed length.

\begin{lemma}[Jumping lemma]\label{lem:jump}
Let $t_0\in T$, let $j\ge1$, and let $S$ be either $(t_0,4)$ or
$(1,t_0)$. Every closed interval $I\subset[\tfrac12,1]$ of length
$\ell_j$ contains a closed interval $I'$ of length $L_j$ for which
there is a rational $r\in S$ satisfying
\[
\Fl(r)<\alpha_j,\qquad \Fr(r)>\beta_j
\quad\text{for every }m\in I'.
\]
\end{lemma}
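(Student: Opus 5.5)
The plan is to choose a single rational $r\in S$ whose jump straddles $H_j$ robustly, and then to use the fact that, for fixed $r$, both $\Fl(r)=m|r-t_0|+W(t_0,r)$ and $\Fr(r)=\Fl(r)+w_r$ are affine in $m$ with slope $|r-t_0|$. So if the straddling holds at one slope $m^*$ with a margin $\eta$ on both sides, it persists on the interval of slopes $|m-m^*|\le \eta/|r-t_0|$. Throughout, split the rationals of $S$ into heavy ones (level $\le j$, weight $\ge 100\delta_j$ by~\eqref{eq:heavy}), those of level exactly $j+1$, and those of level $>j+1$ (total weight $\le\lambda_{j+1}\le\delta_j\ell_j/100$ by~\ref{P2}).

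\emph{Step 1: a heavy jump.} Let $r_h$ be the heavy rational of $S$ nearest to $t_0$. If $\Fl(r_h)<\alpha_j-\delta_j$ for some $m$ in the middle of $I$, then $\Fr(r_h)\ge \Fl(r_h)+100\delta_j>\beta_j$ automatically, since $\beta_j<2\delta_j$. The lower margin $\delta_j$ then persists on an $m$-interval of length $\delta_j/|r_h-t_0|$; this is $\ge L_j$ because $|r_h-t_0|\le 2\Fl(r_h)\le 4\delta_j$ by~\eqref{eq:mono}, and~\ref{P5} gives $L_j\le w_{j+1}/(3\delta_j)$ with $w_{j+1}\le1$. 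Otherwise, for all relevant $m$, the set $\{t: \Fm(t)\in[\alpha_j-\delta_j,\beta_j]\}$ lies strictly before $r_h$. There only non-heavy weights occur, and by~\eqref{eq:mono} these $t$ satisfy $|t-t_0|\asymp\delta_j$.

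\emph{Step 2: reparametrize by the slope that hits a target.} Fix the target $c=\alpha_j-\tfrac12(w_{j+1}-4\delta_{j+1})$. This lies in $[\alpha_j-\delta_j,\alpha_j)$ by~\ref{P3}, and it makes a level-$(j+1)$ jump at $r$ with $\Fl(r)=c$ cover $H_j$ with margin $\eta=\tfrac12(w_{j+1}-4\delta_{j+1})\ge w_{j+1}/3$, using $w_{j+1}\ge100\delta_{j+1}$ from~\ref{P1}. For $r\in S$ before $r_h$ define
\[
\varphi(r)=\frac{c-W(t_0,r)}{|r-t_0|},
\]
the unique $m$ with $\Fl(r)=c$. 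As $r$ moves away from $t_0$, $\varphi$ is strictly decreasing, since the numerator decreases and the denominator increases. Its continuous part has derivative of size $\asymp 1/\delta_j$ in $r$. Its jumps occur at rationals $r'$ and have size $w_{r'}/|r'-t_0|\lesssim w_{r'}/\delta_j$. Hence each level-$(j+1)$ jump is at most about $\ell_j/16$ by~\ref{P3}, and all deeper-level jumps together amount to at most about $\ell_j/100$ by~\ref{P2}. Let $I_0$ be the middle portion of $I$, of length about $\ell_j/2$. Then $\varphi^{-1}(I_0)$ is an $r$-interval, and the total decrease of $\varphi$ across it is about $\ell_j/2$. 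This decrease comes either from the continuous part, in which case the $r$-interval has length $\gtrsim\delta_j\ell_j$, or mostly from level-$(j+1)$ jumps located in it. In the first case~\ref{P4} places a fraction of level $j+1$ in the interval: any interval of length $\ge 200/Q_{j+2}$ contains $p/q$ with $q<Q_{j+2}$, and the interval can be taken so short and so far from the heavy ones that $q\ge Q_{j+1}$. In the second case such fractions are present by definition. Either way we obtain $r$ of level $j+1$ with $m^*=\varphi(r)\in I_0$.

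\emph{Step 3: conclude.} Let $I'$ be the interval of length $L_j$ centered at $m^*$. It lies in $I$ because $L_j\le\ell_j/48$ by~\eqref{eq:small}. For $m\in I'$ the affine dependence gives $|\Fl(r)-c|\le |r-t_0|L_j/2\le 2\delta_j\cdot w_{j+1}/(6\delta_j)<\eta$ by~\ref{P5}, so $\Fl(r)<\alpha_j$ and $\Fr(r)>\beta_j$ throughout $I'$. The main obstacle is Step 2, namely showing rigorously that $\varphi^{-1}(I_0)$ must contain a level-$(j+1)$ rational. One has to bound the accumulation of jumps from levels $>j+1$ via $\lambda_{j+1}$, keep $|r-t_0|$ comparable to $\delta_j$, and make sure the Dirichlet-type density from~\ref{P4} yields a fraction of level exactly $j+1$ rather than a heavy one, which Step 1 has excluded from the window.
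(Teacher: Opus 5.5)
\textbf{Step 1 fails as written.} Since $\alpha_j=\delta_j-2\delta_{j+1}$, your threshold $\alpha_j-\delta_j=-2\delta_{j+1}$ is negative. But $\Fl(r_h)\ge m|r_h-t_0|>0$, so the heavy case never occurs. Your ``otherwise'' conclusion, that for all relevant $m$ the window where $\Fm\in[\alpha_j-\delta_j,\beta_j]$ lies before $r_h$ (i.e.\ $\Fl(r_h)>\beta_j$), therefore has no justification. The persistence estimate is also off: $L_j\le w_{j+1}/(3\delta_j)$ with $w_{j+1}\le 1$ gives only $L_j\le 1/(3\delta_j)$, not $L_j\le 1/4$; you need \ref{P3}, or simply $L_j\le\ell_j/2$.

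This matters because Step 2 depends on such a conclusion. For $\varphi$ to sweep all of $I_0$ on $(t_0,r_h)$, so that $\varphi^{-1}(I_0)$ carries a decrease of $|I_0|$, you need $\varphi(r_h^-)<\min I_0$, i.e.\ $\Fl(r_h)>c$ at $m=\min I_0$. Without it, $\varphi^{-1}(I_0)\cap(t_0,r_h)$ may be short or empty.

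The repair is to drop the margin and use monotonicity in $m$, as the paper does with the initial segment $\{m\in I:f(m)<\alpha\}$. If $\Fl(r_h)\le c$ at $m=\min I_0$, then $I'=[\min I_0-L_j,\min I_0]\subset I$ works with $r=r_h$: there $\Fl(r_h)\le c<\alpha_j$ and $\Fr(r_h)\ge w_{r_h}\ge 100\delta_j>\beta_j$. Otherwise $\Fl(r_h)>c$ throughout $I_0$, which is exactly what Step 2 needs.

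\textbf{After the repair, your route is correct and differs from the paper's Steps 2--3.} The paper tests two slopes $m_1,m_2$, brackets crossing points $\tau_2\le\tau_1$, and derives \eqref{eq:growth} to find a level-$(j+1)$ fraction in $[\tau_2,\tau_1]$. It then computes the full admissible window $\{m:\beta-w<G(m)<\alpha\}$ and checks that it lies in $I$. You instead fix one target $c$ at the midpoint of the slack $w_{j+1}-4\delta_{j+1}$ and budget the decrease of a single monotone function $\varphi$. This makes the last step a one-line symmetric-margin estimate, at the price of bookkeeping you left as ``$\asymp$''.

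Two points make that bookkeeping rigorous. First, on the window, $\varphi\le 1$ gives $t-t_0\ge c-\lambda_j\ge 0.96\delta_j$. Second, if $\varphi^{-1}(I_0)$ has endpoints $p<q$, a level-$(j+1)$ rational at the near endpoint $p$ has $\varphi(p)=\varphi(p^-)$, possibly above $\max I_0$, so it does not qualify. This is harmless:
\begin{itemize}
\item one such jump is at most $w_{j+1}/(0.96\delta_j)<\ell_j/15$;
\item all deeper-level jumps together total at most $\ell_j/96$;
\item so if $(p,q]$ contained no level-$(j+1)$ rational, the continuous part would have to supply a decrease of at least $0.4\ell_j$;
\item this forces $q-p>\delta_j\ell_j/3>2/Q_{j+2}$, and \ref{P4} then gives a fraction of denominator below $Q_{j+2}$ in $(p,q)$.
\end{itemize}
That fraction automatically has level exactly $j+1$, because $(t_0,r_h)$ contains no heavy fraction. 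Your remark about keeping the interval ``far from the heavy ones'' is therefore unnecessary.
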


\begin{proof}
We prove the assertion for $S=(t_0,4)$. Reflection $t\mapsto5-t$
preserves the levels and weights of rational numbers and gives the
other case. Throughout the proof, $t$ and $t'$ denote irrational points.
Abbreviate
\[
\alpha=\alpha_j,\quad\beta=\beta_j,\quad\delta=\delta_j,
\quad\ell=\ell_j,\quad w=w_{j+1}.
\]

\emph{1. Separate a heavy jump from the light jumps.}
Let $r_0$ be the nearest heavy fraction to the right of $t_0$.
It exists because the fraction $3/1$ is heavy. There are only light fractions between
$t_0$ and $r_0$, so
\begin{equation}\label{eq:light}
W(t_0,r_0)\le\lambda_j\le\delta/100.
\end{equation}
The affine increasing function
\[
f(m)=m(r_0-t_0)+W(t_0,r_0)=\Fl(r_0)
\]
is the limit of $\Fm(t)$ as $t\uparrow r_0$.

If $\{m\in I:f(m)<\alpha\}$ has length at least $\ell/2$, choose
a closed interval $I'$ of length $L_j$ in this set; this is possible
because $L_j\le\ell/48$. Taking $r=r_0$ gives
$\Fl(r)<\alpha$, while~\eqref{eq:heavy} gives
$\Fr(r)\ge w_j\ge100\delta>\beta$. This proves the lemma in this case.

Otherwise there is a closed interval
$[m_B,m_B+\ell/2]\subset I$ on which $f\ge\alpha$.
By~\eqref{eq:light},
\[
r_0-t_0\ge\alpha-\delta/100\ge\delta/2.
\]
Put $m_1=m_B+\ell/8$ and $m_2=m_B+\ell/4$. Then
\[
f(m_1)\ge\alpha+\ell\delta/16>\alpha,
\qquad f(m_2)\ge\alpha+\ell\delta/8>\beta,
\]
where $\beta-\alpha\le\ell\delta/400$ by~\eqref{eq:small}.

\emph{2. Find a fraction of level $j+1$ between two crossing points.}
Define
\begin{align*}
\tau_1&=\inf\{t\in(t_0,r_0)\setminus\Q:F_{m_1}(t)\ge\alpha\},\\
\tau_2&=\inf\{t\in(t_0,r_0)\setminus\Q:F_{m_2}(t)\ge\beta\}.
\end{align*}
Continuity of $g$ at $t_0$ gives $\Fm(t)\to0$ as $t\downarrow t_0$.
The inequalities for $f(m_1)$ and $f(m_2)$ therefore place both
crossing points strictly between $t_0$ and $r_0$.
Monotonicity implies that $F_{m_1}<\alpha$ before $\tau_1$ and
$F_{m_1}\ge\alpha$ after it, and similarly for $F_{m_2}$ and $\beta$.

On $(t_0,r_0)$, we have
$\tfrac12(t-t_0)\le\Fm(t)\le(t-t_0)+\delta/100$.
Applying these bounds at the two crossing points gives
\begin{equation}\label{eq:crossings}
\tau_1,\tau_2\ge t_0+\delta/2,
\qquad \tau_1\le t_0+2\alpha<t_0+2\delta.
\end{equation}
Moreover, for $t>\tau_1$,
\[
F_{m_2}(t)=F_{m_1}(t)+\tfrac\ell8(t-t_0)
\ge\alpha+\ell\delta/16>\beta.
\]
Thus $\tau_2\le\tau_1$.

For $t<\tau_2$ and $t'>\tau_1$, both in $(t_0,r_0)$, we have
\[
F_{m_1}(t)<\beta-\tfrac\ell8(t-t_0),
\qquad F_{m_1}(t')\ge\alpha.
\]
Subtract these inequalities and let $t\uparrow\tau_2$,
$t'\downarrow\tau_1$. The total weight is finite, and these one-sided
limits include the weights at both endpoints. Consequently,
\begin{equation}\label{eq:growth}
\begin{split}
m_1(\tau_1-\tau_2)+W[\tau_2,\tau_1]
&\ge\tfrac\ell8(\tau_2-t_0)-(\beta-\alpha)\\
&\ge\ell\delta/16-\ell\delta/400
\ge\ell\delta/32.
\end{split}
\end{equation}

The interval $[\tau_2,\tau_1]\subset(t_0,r_0)$ contains no heavy
fractions. We claim that it contains a fraction of level $j+1$.
If the first term on the left of~\eqref{eq:growth} is at least
$\ell\delta/64$, then, since $m_1\le1$,
\[
\tau_1-\tau_2\ge\ell\delta/64>2/Q_{j+2}.
\]
For an integer $Q\ge2$, every interval of length greater than $2/Q$ contains a point
$k/(Q-1)$, whose reduced denominator is less than $Q$.
Apply this with $Q=Q_{j+2}$. As there are no heavy fractions here,
the fraction obtained has level exactly $j+1$.
If the first term is smaller, then
$W[\tau_2,\tau_1]\ge\ell\delta/64$, whereas the total weight of
levels $j+2,j+3,\ldots$ is at most
$\lambda_{j+1}\le\ell\delta/100$. This again forces a fraction of
level $j+1$ in the interval. Choose one and call it $r$; its weight is $w$.

\emph{3. Vary the slope so that the jump at $r$ covers $H_j$.}
Write
\[
s=r-t_0\in[\delta/2,2\delta],\qquad
G(m)=ms+W(t_0,r)=\Fl(r).
\]
Since $\tau_2\le r\le\tau_1$, one-sided limits at $r$ give
\[
G(m_1)\le\alpha,\qquad G(m_2)+w\ge\beta.
\]
Using $w\le\ell\delta/16$ and $s\ge\delta/2$, we obtain
\begin{align*}
G(m_B)&=G(m_1)-s\ell/8
\le\alpha-\ell\delta/16<\beta-w,\\
G(m_B+3\ell/8)&=G(m_2)+s\ell/8
\ge\beta-w+\ell\delta/16\ge\beta>\alpha.
\end{align*}
Both tested slopes lie in $I$. Since $G$ is strictly increasing,
\[
\{m:\beta-w<G(m)<\alpha\}
\]
is an open interval contained in $I$. Its length is
\[
\frac{w-(\beta-\alpha)}s
=\frac{w-4\delta_{j+1}}s
\ge\frac{0.96w}{2\delta}
>\frac{w}{3\delta}\ge L_j.
\]
Here we used $\delta_{j+1}\le w/100$ and~\ref{P5}.
Choose a closed interval $I'$ of length $L_j$ inside this open interval.
For every $m\in I'$, we then have $\Fl(r)<\alpha$ and
$\Fr(r)=G(m)+w>\beta$, as required.
\end{proof}

\subsection{A Borel family of private slopes}
We now iterate the jumping lemma along a binary tree. At each step we
retain two separated slope intervals, so that all binary sequences give
different limiting slopes. A countable least-choice rule makes the
construction Borel in the base point.

\begin{lemma}\label{lem:select}
There exists a Borel map
$\mu\colon T\times D\times\Cs\to[\tfrac12,1]$, injective in the third variable,
such that every $\mu(t_0,d_0,\omega)$ is a private slope of
$(t_0,u(t_0,d_0))$ in $\Sigma$.
\end{lemma}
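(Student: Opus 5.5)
We build, for fixed $(t_0,d_0)$, a binary tree of nested closed slope intervals and let $\mu(t_0,d_0,\omega)$ be the point cut out by the branch $\omega$. Start from $I_\emptyset=[\tfrac12,1]$; this has length $\tfrac12\ge\ell_1=1/6$. Suppose a closed interval $I_s$ of length $L_{j-1}$ has been chosen for a binary word $s$ of length $j-1$, with the convention $L_0=1/2$ or simply $I_\emptyset$ of length at least $\ell_1$. Since $\ell_j=L_{j-1}/6$, the interval $I_s$ contains two closed subintervals $I^0,I^1$ of length $\ell_j$ separated by a gap of at least $\ell_j$: take the leftmost and rightmost sixths. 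Apply the Jumping Lemma~\ref{lem:jump} to each, with $S$ the relevant side for level $j$ (determined by $a_j(d_0)$). This gives closed intervals $I_{s0}\subset I^0$ and $I_{s1}\subset I^1$ of length $L_j$, and a rational witness for each, such that every $m$ in them satisfies \eqref{eq:skip} at level $j$. Put $\mu(t_0,d_0,\omega)=\bigcap_j I_{\omega|j}$. This is a single point because $L_j\to0$; indeed $L_j\le\ell_j/48$ by \eqref{eq:small}.

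\textbf{Correctness.} The limit slope lies in every $I_{\omega|j}$. Hence it jumps over every level $j$, and Lemma~\ref{lem:private} says it is a private slope of $(t_0,u(t_0,d_0))$. For injectivity, let two branches first differ at depth $j$. Their intervals at depth $j$ lie in $I^0$ and $I^1$, which are disjoint, so the limits differ.

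\textbf{Borel measurability.} This is the main obstacle. The Jumping Lemma only asserts existence, so we need a canonical choice. We make the choice countable and least. Enumerate all triples $(r,p,q)$ with $r\in\cR$ and $p<q$ rationals. At each node, among triples with $[p,q]\subset I^\epsilon$ of length at least $L_j$, $r$ on the relevant side, and \eqref{eq:skip} holding for all $m\in[p,q]$, take the first one. Then take $I_{s\epsilon}$ to be the left closed subinterval of $[p,q]$ of length $L_j$. Rational endpoints suffice: shrink the interval from the lemma slightly inside the open interval found in its step 3, so that the strict inequalities persist. Alternatively, use intervals of length exactly $L_j$ with rational left endpoint $p$.

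For a fixed triple, the condition ``\eqref{eq:skip} holds on $[p,q]$'' reduces to two inequalities at the endpoints, since $\Fl(r)$ and $\Fr(r)$ are affine increasing in $m$. These are $\Fl(r)|_{m=q}<\alpha_j$ and $\Fr(r)|_{m=p}>\beta_j$. Both are Borel conditions in $t_0$, because $W(t_0,r)$ is a Borel function of $t_0$, being a countable sum of indicator-weighted terms. The relevant side depends on $d_0$ only through the continuous digit $a_j(d_0)$. The condition $[p,q]\subset I^\epsilon$ refers to the previously chosen endpoints, which by induction are Borel functions of $(t_0,d_0)$ taking countably many values.

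So each node's endpoints are Borel functions of $(t_0,d_0)$, defined by a countable case split over Borel pieces. Then $\mu(t_0,d_0,\omega)=\lim_j \text{left endpoint of } I_{\omega|j}$ is a pointwise limit of functions that are Borel in $(t_0,d_0)$ and depend continuously (locally constantly) on $\omega$. It is therefore Borel on $T\times D\times\Cs$.
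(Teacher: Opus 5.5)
Your proposal is correct and follows essentially the paper's route: a binary tree of nested slope intervals obtained by applying the jumping lemma on two separated subintervals, a least-index choice over a countable enumeration of pairs (rational $r$, rational-endpoint slope interval) made Borel by induction on the endpoints, and $\mu$ as the pointwise limit of endpoints that are locally constant in $\omega$. The only difference is bookkeeping. You demand rational-endpoint intervals of length at least $L_j$ (or exactly $L_j$ with rational left endpoint), which needs slack that Lemma~\ref{lem:jump} does not state; its proof does supply this slack in both of its cases, not only in step~3. The paper instead takes $|K_s|=L_{|s|}/2$ and uses thirds, so that a rational interval of length at least $L_j/2$ inside the lemma's output suffices, the lemma is used purely as stated, and the root $[\tfrac12,1]$ fits without any special convention for $L_0$.
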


\begin{proof}
Enumerate all pairs $(r_k,J_k)$, where $r_k\in\cR$ and
$J_k\subset[\tfrac12,1]$ is a closed interval with rational endpoints.
For each $(t_0,d_0)$ we construct closed intervals $K_s$, indexed by
finite binary words $s$, with
\[
K_\varnothing=[\tfrac12,1],\qquad |K_s|=L_{|s|}/2.
\]
Every slope in $K_s$ will jump over levels $1,\ldots,|s|$.

Suppose $|s|=j-1$ and $K_s$ is constructed. Its left and right closed
thirds, denoted $K_s^0,K_s^1$, are separated and have length $\ell_j$.
For each $e\in\{0,1\}$ choose the least $k$ such that
\begin{enumerate}[label=(\roman*)]
\item $J_k\subset K_s^e$ and $|J_k|\ge L_j/2$;
\item $r_k$ is on the relevant side of $t_0$ at level $j$;
\item $\Fl(r_k)<\alpha_j$ and $\Fr(r_k)>\beta_j$ at both
endpoints $m$ of $J_k$.
\end{enumerate}
For fixed $t_0,r_k$, both expressions in (iii) are affine in $m$;
thus (iii) holds throughout $J_k$. The jumping lemma supplies a rational
$r$ and a closed slope interval of length $L_j$ inside $K_s^e$.
This interval contains a closed interval with rational endpoints of
length at least $L_j/2$, so an admissible $k$ exists.
Let $K_{se}$ be the left subinterval of $J_k$ of length $L_j/2$.
It retains all earlier avoidance conditions and adds the condition
at level $j$.

The intervals along every branch are nested, and
$L_j\le L_{j-1}/12$ by~\ref{P5}. Hence
\[
\{\mu(t_0,d_0,\omega)\}
=\bigcap_{j\ge1}K_{\omega_1\cdots\omega_j}
\]
defines a single slope. Different branches separate at their first
differing digit, so the map is injective in the third variable.
The limiting slope belongs to every interval on its branch and
therefore jumps over every level. Lemma~\ref{lem:private} makes it
private. Agreement of the first $j$ digits confines two limiting slopes
to one interval of length $L_j/2$; in particular, the map is continuous
in $\omega$ for fixed $(t_0,d_0)$.

It remains to check joint Borel measurability. For each fixed rational
$r$, the function $t_0\mapsto W(t_0,r)$ is a convergent sum of Borel
functions, and $d_0\mapsto a_j(d_0)$ is continuous.
Assume inductively that the endpoints of $K_s$ are Borel functions of
$(t_0,d_0)$. For each $k$, conditions (i)--(iii) then define a Borel
set of base points. The least admissible index is Borel: its level set
at $k$ is the admissibility set for $k$ with the finitely many earlier
admissibility sets removed. Thus the endpoints of each child interval
are Borel functions as well.
For fixed $j$, the left endpoint of $K_{\omega_1\cdots\omega_j}$ is
jointly Borel, since it is Borel on each of the finitely many binary
cylinders of length $j$. These endpoints converge to $\mu$, proving
the assertion.
\end{proof}

\section{Proof of the main theorem}\label{sec:main-proof}

\begin{proof}[Proof of Theorem~\ref{thm:main}]
Take the sets $A\subset D$ and $F\subset D\times\Cs$ from Fact~A
and the slope map $\mu$ from Lemma~\ref{lem:select}. Define
\[
\Gamma(t,d,\omega)=\bigl(u(t,d)-mt,m\bigr),
\qquad m=\mu(t,d,\omega),\qquad
B_0=\Gamma(T\times F).
\]
The point $\Gamma(t,d,\omega)$ represents the line of slope $m$
through $(t,u(t,d))$. Since $u$ is bounded, $2<t<3$, and
$\tfrac12\le m\le1$, the set $B_0$ is bounded.

We first verify that $B_0$ is Borel. The domain $T\times F$ is Borel
in the Polish space $\R\times D\times\Cs$, and $\Gamma$ is Borel.
If two triples have the same image, they represent the same line.
As this line has a private slope at its first base point, their points
in $\Sigma$ coincide. Thus their $t$-coordinates coincide, and so do
their $d$-coordinates because $u(t,\cdot)$ is injective. Equality of
the slopes and injectivity of $\mu$ in the third variable give equality
of the last coordinates. Hence $\Gamma$ is injective, and Fact~B applies.

For each $t\in T$, the compact set $u(t,D)=D+g(t)$ recovers exactly
the encoded copy of $A$:
\begin{equation}\label{eq:window}
\pi_t(B_0)\cap u(t,D)=u(t,A).
\end{equation}
Indeed, if $d\in A$, some $(d,\omega)$ belongs to $F$, and the line
represented by $\Gamma(t,d,\omega)$ passes through $(t,u(t,d))$.
This proves inclusion of the right-hand side in the left-hand side.
Conversely, if a line represented by
$\Gamma(t',d',\omega')$, with $(d',\omega')\in F$, passes through
$(t,u(t,d))\in\Sigma$, its private-slope property forces
$(t,u(t,d))=(t',u(t',d'))$. Hence $t=t'$, $d=d'\in A$, proving
the other inclusion.

The right-hand side of~\eqref{eq:window} is a translate of the
non-Borel set $A$, and is therefore non-Borel. Since the intersection
of a Borel set with the compact set $u(t,D)$ would be Borel,
$\pi_t(B_0)$ is non-Borel for every $t\in T$.
Apply Lemma~\ref{lem:arc} to the arc
$\{\arctan t:2<t<3\}$ with its countable set of rational parameters
removed. The resulting finite union of rotated and translated copies
of $B_0$ is bounded and Borel, and all its orthogonal projections are
non-Borel.
\end{proof}

\appendix
\section{From an arc to every direction}\label{app:directions}

\begin{proof}[Proof of Lemma~\ref{lem:arc}]
Choose $\rho>0$ so that $B_0$ lies in the disc of radius $\rho$
centred at the origin. If $R_\alpha$ denotes rotation through $\alpha$,
then
\[
p_\theta(R_\alpha B_0+c)
=p_{\theta-\alpha}(B_0)+p_\theta(c).
\]
Thus this projection is non-Borel whenever
$\theta\in(J\setminus E)+\alpha$.

Choose $N$ so that $N|J|\ge6\pi$. Perturb the equally spaced angles
to obtain
\[
\alpha_k=k\pi/N+\varepsilon_k,\qquad
|\varepsilon_k|<\pi/(4N),\qquad k=1,\ldots,N,
\]
with $\alpha_k-\alpha_l\notin E-E$ modulo $\pi$ for $k\ne l$.
These perturbations can be chosen successively, since at each step
only countably many values are excluded.
Every direction belongs to at least four arcs $J+\alpha_k$.
Indeed, an arc of length $|J|$ contains at least
$N|J|/\pi-2\ge4$ of the perturbed angles.
The sets $E+\alpha_k$ are pairwise disjoint, so at most one of these
indices is exceptional in any given direction.

Next choose distinct points $c_1,\ldots,c_N\in\R^2$ so that the vectors
$c_k-c_l$, $k<l$, are pairwise nonparallel. An inductive choice is
possible by avoiding finitely many lines at each step. Put
\[
B_k=R_{\alpha_k}B_0+\lambda c_k,
\qquad \cB=\bigcup_{k=1}^N B_k,
\]
where $\lambda$ will be chosen large. The projection $p_\theta(B_k)$
lies in the closed interval $\Delta_k(\theta)$ of radius $\rho$
centred at $\lambda p_\theta(c_k)$. Two such intervals can intersect
only if
\[
\theta\in O_{kl}
=\{\theta:|p_\theta(c_k-c_l)|\le2\rho/\lambda\}.
\]
As $\lambda\to\infty$, each $O_{kl}$ shrinks to the direction
orthogonal to $c_k-c_l$. These directions are distinct. Since there
are finitely many pairs, we may fix $\lambda$ so that the sets
$O_{kl}$ are pairwise disjoint.

Fix any direction $\theta$. At least four indices satisfy
$\theta\in J+\alpha_k$. At most one is exceptional, and at most two
belong to a pair of overlapping projection intervals. There is
therefore an index $k$ for which $\theta-\alpha_k\in J\setminus E$
and $\Delta_k(\theta)$ is disjoint from all other projection intervals.
Consequently,
\[
p_\theta(\cB)\cap\Delta_k(\theta)
=p_\theta(B_k)
=p_{\theta-\alpha_k}(B_0)+\lambda p_\theta(c_k)
\]
is non-Borel. The interval $\Delta_k(\theta)$ is closed, so
$p_\theta(\cB)$ is non-Borel as well. The set $\cB$ is a finite
union of bounded Borel sets, which completes the construction.
\end{proof}

\section{Choice of parameters}\label{app:params}

Set $Q_1=1$, $Q_2=2$, $w_1=1$, $\delta_1=1/100$, and $L_0=1$.
Level $1$ consists of the fractions $2$ and $3$. At step $j\ge1$,
suppose that
\[
Q_1,\ldots,Q_{j+1},\quad w_1,\ldots,w_j,\quad
\delta_1,\ldots,\delta_j,\quad L_0,\ldots,L_{j-1}
\]
have been chosen. Then $\ell_j=L_{j-1}/6$ is known, as is
\[
\varepsilon_j=\frac1{200}\min_{1\le k\le j}\delta_k\ell_k.
\]
The sequence $\varepsilon_j$ is non-increasing. Make the following
choices, in order:
\begin{enumerate}[label=(\roman*)]
\item Choose an integer $Q_{j+2}>Q_{j+1}$ satisfying~\ref{P4}.
This determines the finite number $n_{j+1}$.
\item Choose $0<w_{j+1}<w_j$ satisfying~\ref{P3} and
$w_{j+1}n_{j+1}\le2^{-j}\varepsilon_j$.
\item Set $\delta_{j+1}=\min(\delta_j,w_{j+1})/100$ and choose
$L_j$ by~\ref{P5}.
\end{enumerate}
There is no circularity: each bound in (i)--(iii) uses quantities
already chosen. Conditions~\ref{P1}, \ref{P3}, \ref{P4}, and~\ref{P5}
hold by construction.

To verify~\ref{P2}, use
$w_i n_i\le2^{-(i-1)}\varepsilon_{i-1}$ for $i\ge2$. Then
\[
\lambda_j\le\varepsilon_j\sum_{i>j}2^{-(i-1)}
\le\varepsilon_j\le\frac{\delta_j\ell_j}{200}
\le\frac{\delta_j}{100},
\]
where $\ell_j\le1$, and
\[
\lambda_{j+1}\le\varepsilon_{j+1}
\le\frac{\delta_j\ell_j}{200}
\le\frac{\delta_j\ell_j}{100}.
\]
In particular, $\sum_iw_i n_i<\infty$, as required for the jump function.

\end{document}